\newcommand{\dotp}[2]{\left\langle #1, #2\right\rangle}
\newcommand{\abs}[1]{\left\vert#1\right\vert}
\newcommand{\norm}[1]{\big\Vert#1\big\Vert}
\newcommand\wt[1]{{ \widetilde{#1} }}
\newcommand \bbP{\mathbb{P}}
\newcommand \bbE{\mathbb{E}}
\def\m{\mathcal}
\def\mb{\mathbb}
\def\mr{\mathrm}
\def\ind{\mathbbm{1}}
\newtheorem{theorem}{Theorem}[section]
\newtheorem{lemma}[theorem]{Lemma}
\newtheorem{corollary}[theorem]{Corollary}
\newtheorem{rem}{Remark}[section]
\begin{document}

\title{Optimal Bayesian estimation in stochastic block models}

\author{ {\bf Debdeep Pati} \\Department of Statistics, Florida State University, Tallahassee, FL, \\email: debdeep@stat.fsu.edu \\
{\bf Anirban Bhattacharya}\\
Department of Statistics, Texas A \&M University, College Station, TX, \\email: anirbanb@stat.tamu.edu
}

\maketitle

\begin{abstract}
With the advent of structured data in the form of social networks, genetic circuits and protein interaction networks, statistical analysis of networks has gained popularity over recent years.  Stochastic block model constitutes a classical cluster-exhibiting random graph model for networks.  There is a  substantial amount of literature devoted to proposing strategies for estimating and inferring parameters of the model, both from classical and Bayesian viewpoints. Unlike the classical counterpart, there is however a dearth of theoretical results on the accuracy of estimation in the Bayesian setting. In this article, we undertake a theoretical investigation of the posterior distribution of the parameters in a stochastic block model. In particular, we show that one obtains optimal rates of posterior convergence with routinely used multinomial-Dirichlet priors on cluster indicators and uniform priors on the probabilities of the random edge indicators.  En route, we develop geometric embedding  techniques to exploit the lower dimensional structure of the parameter space which may be of independent interest. 
\end{abstract}
\noindent\textsc{Keywords}: {Bayesian; block models; clustering; multinomial-Dirichlet; network; posterior contraction; random graph }

\section{Introduction}
Data available in the form of networks are increasingly becoming common in applications ranging from brain connectivity, protein interactions, web applications and social networks to name a few, motivating an explosion of activity in the statistical analysis of networks in recent years \cite{goldenberg2010survey}. Estimating large networks offers unique challenges in terms of structured dimension reduction and estimation in stylized domains, 
necessitating new tools for inference.  A rich variety of probabilistic models have been studied for network estimation, ranging from the classical Erdos and Renyi graphs 
\cite{erdos1961evolution}, exponential random graph models \cite{holland1981exponential}, stochastic block models \cite{holland1983stochastic}, markov graphs 
\citep{frank1986markov} and latent space models \citep{hoff2002latent} to name a few. 

In a network with $n$ nodes, there are $O(n^2)$ possible connections betweens pairs of nodes, the exact number depending on whether the network is directed/undirected and whether self-loops are permitted.  
A common goal of the parametric models mentioned previously is to parsimoniously represent the $O(n^2)$ probabilities of connections between pairs of nodes in terms of fewer parameters. The stochastic block model achieves this by clustering the nodes into $k \ll n$ groups, with the probability of an edge between two nodes solely dependent
on their cluster memberships. The block model originated in the mathematical sociology literature \cite{holland1983stochastic}, with subsequent widespread applications in statistics \cite{wang1987stochastic,snijders1997estimation,nowicki2001estimation}. In particular, the clustering property of block models offers a natural way to find {\em communities} within networks, inspiring a large literature on community detection \cite{bickel2009nonparametric,newman2012communities,zhao2012consistency,karrer2011stochastic,zhao2011community,amini2013pseudo}. Various modifications of the stochastic block model have also been proposed, including the mixed membership stochastic block model \cite{airoldi2009mixed} and degree-corrected stochastic block model \cite{zhao2012consistency}.  

Statistical accuracy of parameter inference in the stochastic block model is of growing interest, with one of the objects of interest being the $n \times n$ matrix of probabilities of edges between pairs of nodes, which we shall denote by $\theta = (\theta_{ij})$. Using a singular-value thresholding approach, \cite{chatterjee2014matrix} obtained a $\sqrt{k/n}$ rate for estimating $\theta$ with respect to the squared $\ell_2$ distance in a $k$-component stochastic block model. In a recent technical report, \cite{gao2014rate} obtained an improved $k^2/n^2 + \log k/n$ rate by considering a least-squares type estimator. They also showed that the resulting rate is minimax-optimal; 
interestingly the minimax rate comprises of two parts which \cite{gao2014rate} refer to as the {\em nonparametric} and {\em clustering} rates respectively. Among other related work, \cite{bickel2013asymptotic} provided conditions for asymptotic normality of maximum likelihood estimators in stochastic block models. 

In this article, we consider a Bayesian formulation of a stochastic block model where $\theta$ is equipped with a hierarchical prior and study the convergence of the posterior distribution assuming the data to be generated from a stochastic block model. We show that one obtains the minimax rate of posterior convergence with essentially automatic prior choices, such as multinomial-Dirichlet priors on cluster indicators and uniform priors on the probability of the random edge indicators. Such priors are commonly used and there is a sizable literature \cite{snijders1997estimation,nowicki2001estimation,golightly2005bayesian,mcdaid2013improved} on posterior sampling and inference in the stochastic block model. However, to the best of our knowledge, the present paper is the first to study the asymptotic properties of Bayesian estimation in stochastic block models. 

Theoretical investigation of the posterior distribution in block models offers some unique challenges relative to the small but growing literature on posterior convergence in high-dimensional {\em sparse} problems \cite{castilloneedles,pati2014posterior,banerjee2014posterior,castillo2015bayesian}. When a large subset of the parameters are exactly or approximately zero, the sparsity assumption can be exploited to reduce the complexity of the model space to derive tests for the true parameter versus the complement of a neighborhood of the true parameter \cite{castilloneedles,pati2014posterior}. It is now well appreciated that constructing such tests play a crucial role in posterior asymptotics \cite{ghosal2000convergence,gine2011rates}. In the present setting, we exploit the parsimonious structure of the parameters space as a result of clustering of $n$ nodes into $k < n$ communities and develop geometric embedding techniques to derive such tests.

\section{Preliminaries}\label{sec:prelim}
For $\m S \subset \mb R$, we shall denote the set of all $d \times d$ matrices with entries in $\m S$ by $\m S^{d \times d}$. For any $B = (B_{ll'}) \in \mb R^{d \times d}$, we denote the Euclidean (equivalently Frobenius) norm of $B$ by $\norm{B} = \sqrt{\sum_{l=1}^d \sum_{l'=1}^d B_{ll'}^2}$. Given $X^* \in \mb R^{d \times d}, W \in \mb R_+^{d \times d}$, let $\xi_{d^2}(X^*; W)$
denote the unit ellipsoid with {\em center} $X^*$ and {\em weight} $W$ given by
\begin{align}\label{eq:ellipsoid}
\xi_{d^2}(X^*; W) = \bigg\{ X \in \mb R^{d \times d} : \sum_{l=1}^d \sum_{l'=1}^d W_{l l'} (X_{l l'} - X^*_{ll'})^2 \le 1 \bigg\}.
\end{align}
Viewed as a subset of $\mb R^{d^2}$, the Euclidean volume of $\xi_{d^2}(X^*; W)$, denoted by $| \xi_{d^2}(X^*; W) |$, is
\begin{align}\label{eq:vol}
| \xi_{d^2}(X^*; W) | = \frac{\pi^{d^2}}{\Gamma(d^2/2 + 1)} \prod_{l=1}^d \prod_{l' = 1}^d W_{ll'}^{-1/2}.
\end{align} 

Given sequences $\{a_n\}, \{b_n\}$, $a_n \lesssim b_n$ indicates there exists a constant $K > 0$ such that $a_n \le K b_n$ for all large $n$. We say $a_n \asymp b_n$ when $a_n \lesssim b_n$ and $b_n \lesssim a_n$. Throughout, $C, C'$ denote positive constants whose values might change from one line to the next. 

\section{Stochastic Block models} \label{sec:sbm}

Let $A  = (A_{ij}) \in \{0, 1\}^{n \times n}$ denote the adjacency matrix of a network with $n$ nodes, with 
$A_{ij} = 1$ indicating the presence of an edge from node $i$ to node $j$ and $A_{ij} = 0$ indicating a lack thereof. To keep the subsequent notation clean, we shall consider directed networks with self-loops so that $A_{ij}$ and $A_{ji}$ need not be the same and $A_{ii}$ can be both $0$ and $1$. Our theoretical results can be trivially modified to undirected networks with or without self-loops. 

Let $\theta_{ij}$ denote the probability of an edge from node $i$ to $j$, with $A_{ij} \sim \mbox{Bernoulli}(\theta_{ij})$ independently for $1 \le i, j \le n$. A stochastic block model postulates that the nodes are clustered into communities, with the probability of an edge between two nodes solely dependent on their community memberships. Specifically, let $z_i \in \{1, \ldots, k\}$ denote the cluster membership of the $i$th node and $Q = (Q_{rs}) \in [0, 1]^{k \times k}$ be a matrix of probabilities, with $Q_{rs}$ indicating the probability of an edge from any node $i$ in cluster $r$ to any node $j$ in cluster $s$. With these notations, a $k$-component stochastic block model is given by 
\begin{align}\label{eq:sBm}
A_{ij} \sim \mbox{Bernoulli}(\theta_{ij}), \quad \theta_{ij} = Q_{z_i z_j}.
\end{align}
We use $\bbE_{\theta} \slash \bbP_{\theta}$ to denote an expectation/probability under the sampling mechanism \eqref{eq:sBm}. 

The stochastic block model clearly imposes a parsimonious structure on the node probabilities $\theta = (\theta_{ij})$ when $k \ll n$, reducing the effective number of parameters from $O(n^2)$ to $O(k^2 + n)$. To describe the parameter space for $\theta$, we need to introduce some notations. 
For $k \le n$, let $\m Z_{n, k} = \big\{(z_1, \ldots, z_n) : z_i \in \{1, \ldots, k\}, 1 \le i \le n \big\}$ denote all possible clusterings of $n$ nodes into $k$ clusters. Elements of $\m Z_{n, k}$ with be denoted by $z = (z_1, \ldots, z_n)$. For any $1 \le r \le k$, $z^{-1}(r)$ is used as a shorthand for $\{1 \le i \le n : z_i = r\}$; the nodes belonging to cluster $r$. When $z$ is clear from the context, we shall use $n_r = |z^{-1}(r)|$ to denote the number of nodes in cluster $r$; clearly $\sum_{r=1}^k n_r = n$. 
With these notations, the parameter space $\Theta_k$ for $\theta$ is given by 
\begin{align}\label{eq:theta_k}
\Theta_k =  \{ \theta \in [0,1]^{n\times n} : \theta_{ij} = Q_{z_i z_j}, \, \, z \in \m Z_{n, k}, \, 
Q \in [0, 1]^{k \times k} \}.
\end{align}
For any $z \in \m Z_{n, k}$ and $Q \in [0, 1]^{k \times k}$, we denote the corresponding $\theta \in \Theta_k$ by $\theta^{z, Q}$, so that $\theta^{z, Q}_{ij} = Q_{z_i z_j}$. In fact, $(z, Q) \mapsto \theta^{z, Q}$ is a surjective map from $\m Z_{n, k} \times [0, 1]^{k \times k} \to \Theta_k$, though it is clearly not injective.

Given $z \in \m Z_{n, k}$, let $A_{[rs]}$ denote the $n_r \times n_s$ sub matrix of $A$ consisting of entries $A_{ij}$ with $z_i = r$ and $z_j = s$. The joint likelihood of $A$ under model \eqref{eq:sBm} can be expressed as 
{\small \begin{align}\label{eq:like}
P(A \mid z, Q) = \prod_{r = 1}^k \prod_{s = 1}^k P(A_{[rs]} \mid z, Q), \quad P(A_{[rs]} \mid z, Q) = \prod_{i: z_i = r} \prod_{j: z_j = s} Q_{rs}^{A_{ij}} (1 - Q_{rs})^{1 - A_{ij}}.
\end{align}}
A Bayesian specification of the stochastic block model can be completed by assigning independent priors to $z$ and $Q$, which in turn induces a prior on $\Theta_k$ via the mapping $(z, Q) \mapsto \theta^{z, Q}$. We generically use $p(z, Q) = p(z) p(Q)$ to denote the joint prior on $z$ and $Q$. The induced prior on $\Theta_k$ will be denoted by $\Pi(\theta)$ and the corresponding posterior given data $A = (A_{ij})$ will be denoted by $\Pi_n(\theta \mid A)$. The following fact is useful and heavily used in the sequel: for any $U \subset \Theta_k$, 
\begin{align}\label{eq:prior_U}
\Pi(U) = \sum_{z \in \m Z_{n,k}} \Pi(U \mid z) \ p(z) = \sum_{z \in \m Z_{n,k}} p(Q : \theta^{z,Q} \in U) \ p(z),
\end{align}
where the second equality uses the independence of $z$ and $Q$. Specific choices of $p(z)$ and $p(Q)$ are discussed below. 

We assume independent $U(0, 1)$ prior on $Q_{rs}$. We consider a hierarchical prior on $z$ where each node has probability $\pi_r$ of being allocated to the $r$th cluster independently of the other nodes, and the vector of probabilities $\pi = (\pi_1, \ldots, \pi_k)$ follows a $\mbox{Dirichlet}(\alpha_1, \ldots, \alpha_k)$ prior. Here $\alpha_1, \ldots, \alpha_k$ are fixed hyper-parameters that do not depend on $k$ or $n$; a default choice is $\alpha_r = 1/2$ for all $r = 1, \ldots, k$. Model \eqref{eq:sBm} along with the prior specified above can be expressed hierarchically as follows:
\begin{align}
& Q_{rs} \stackrel{\text{ind}} \sim U(0, 1),  \quad r, s = 1, \ldots, k, \label{eq:sbm1}\\
& P(z_i = k \mid \pi) = \pi_k, \quad i = 1, \ldots, n,  \label{eq:sbm2}\\
& \pi \sim \mbox{Dirichlet}(\alpha_1, \ldots, \alpha_k), \label{eq:sbm3}\\
& A_{ij} \mid z, Q  \stackrel{\text{ind}} \sim \mbox{Bernoulli}(\theta_{ij}), \quad \theta_{ij} = Q_{z_i z_j}. \label{eq:sbm4}
\end{align}
A hierarchical specification as in (or very similar to) \eqref{eq:sbm1} -- \eqref{eq:sbm4} has been commonly used in the literature; see for example, \cite{snijders1997estimation,nowicki2001estimation,golightly2005bayesian,mcdaid2013improved}. Analytic marginalizations can be carried out due to the conjugate nature of the prior, facilitating posterior sampling \cite{mcdaid2013improved}. In particular, 
using standard multinomial-Dirichlet conjugacy, the marginal prior of $z$ can be written as 
\begin{align}\label{eq:dir_mult}
p(z) = \frac{\Gamma(\sum_{r=1}^k \alpha_r)}{\Gamma(n + \sum_{r=1}^k \alpha_r)} \prod_{r=1}^k \frac{\Gamma(n_r + \alpha_r)}{\Gamma(\alpha_r)}, \quad z \in \m Z_{n, k},
\end{align}
where recall that $n_r = \sum_{i=1}^n \ind(z_i = r)$. The following lemma provides a bound on the ratio of prior probabilities $p(z)/p(z')$ which is used subsequently in the proof of our main theorem. 
\begin{lemma}\label{lem:prior_rat}
Assume $z' \in \m Z_{n,k}$ with $n'_r = \sum_{i=1}^n \ind(z_i' = r) \ge 1$ for all $r = 1, \ldots k$. Then, $\max_{z \in \m Z_{n,k}} p(z)/p(z') \le e^{C n \log k}$, where $C$ is a positive constant. 
\end{lemma}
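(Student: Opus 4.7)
The plan is to reduce the ratio to a product of Gamma quotients, upper-bound the numerator via the Pochhammer representation, and lower-bound the denominator by a combination of the multinomial coefficient bound and AM--GM, finishing with Stirling.

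Since $\sum_r n_r = \sum_r n_r' = n$, the prefactors $\Gamma(\sum_r \alpha_r)/\Gamma(n + \sum_r \alpha_r)$ and $\prod_r \Gamma(\alpha_r)^{-1}$ cancel in \eqref{eq:dir_mult}, leaving $p(z)/p(z') = \prod_{r=1}^k \Gamma(n_r + \alpha_r)/\Gamma(n_r' + \alpha_r)$. For the numerator, I use $\Gamma(n_r + \alpha_r) = \Gamma(\alpha_r)\prod_{j=0}^{n_r - 1}(\alpha_r + j)$; concatenating across $r$ gives exactly $n$ factors, each bounded by $n - 1 + \alpha_{\max}$ with $\alpha_{\max} := \max_r \alpha_r$, hence $\prod_r \Gamma(n_r + \alpha_r) \le (n + \alpha_{\max})^n \prod_r \Gamma(\alpha_r)$.

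For the denominator, the assumption $n_r' \ge 1$ together with $\alpha_r + j \ge j$ for $j \ge 1$ yields $\Gamma(n_r' + \alpha_r) \ge \alpha_r \Gamma(\alpha_r) (n_r' - 1)!$, so it remains to lower-bound $\prod_r (n_r' - 1)!$. I would write this as $\prod_r n_r'!/\prod_r n_r'$ and apply two elementary estimates: the multinomial bound $\binom{n}{n_1', \ldots, n_k'} \le k^n$ (immediate from $\sum \binom{n}{n_1', \ldots, n_k'} = k^n$) gives $\prod_r n_r'! \ge n!/k^n$, while AM--GM gives $\prod_r n_r' \le (n/k)^k$. Combining, $\prod_r (n_r' - 1)! \ge n!\, k^{k - n}/n^k$.

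Putting the two bounds together produces $p(z)/p(z') \le C_\alpha (n + \alpha_{\max})^n k^{n - k} n^k/n!$ with $C_\alpha$ a constant depending only on the $\alpha_r$'s. Stirling's formula $n! \gtrsim (n/e)^n \sqrt{n}$ turns this into $p(z)/p(z') \lesssim e^n k^{n-k} n^k$, so $\log[p(z)/p(z')] \le n + n \log k + k \log(n/k) + O(\log n)$. Since $\log x / x$ is decreasing for $x \ge e$, one has $k \log n \le n \log k$ whenever $k \ge 2$ and $n$ is large, and $n \le (n/\log 2)\log k$ as well; all the lower-order terms are thus absorbed into $C n \log k$ for a suitable constant $C$.

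The main obstacle is that $z'$ is only assumed to satisfy $n_r' \ge 1$ rather than to be balanced, whereas the worst-case ratio is attained when $z$ is concentrated at one cluster and $z'$ is balanced at $n_r' \approx n/k$. The multinomial plus AM--GM step produces exactly the balanced-case lower bound on $\prod_r (n_r' - 1)!$ directly, avoiding a convexity minimization over the integer simplex $\{(n_r') : n_r' \ge 1, \sum n_r' = n\}$.
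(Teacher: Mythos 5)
Your argument is correct and reaches the stated bound, but it takes a genuinely different route from the paper. The paper first ``integerizes'' the Dirichlet parameters, replacing $\alpha_r$ by $\beta_r = \lceil\alpha_r\rceil$ in the numerator and $\gamma_r = \lfloor\alpha_r\rfloor$ in the denominator at a cost of a $(C/C')^k$ factor, then rewrites the whole ratio as a quotient of multinomial coefficients, bounds those by their extremal values (maximum at the balanced composition $m_r = m/k$, minimum at the concentrated one), and finishes with Stirling. You instead stay with the Pochhammer representation $\Gamma(n_r+\alpha_r) = \Gamma(\alpha_r)\prod_{j=0}^{n_r-1}(\alpha_r + j)$: the numerator is upper-bounded crudely by $(n+\alpha_{\max})^n\prod_r\Gamma(\alpha_r)$ because there are exactly $n$ factors across $r$, and the denominator is lower-bounded by $\prod_r \alpha_r\Gamma(\alpha_r)(n_r'-1)!$ and then by combining the multinomial-sum inequality $\binom{n}{n_1',\ldots,n_k'}\le k^n$ with AM--GM on $\prod_r n_r'$. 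Your version is more elementary --- it avoids the case analysis needed to justify the integerization --- at the price of the coarser $(n+\alpha_{\max})^n$ estimate, which is harmless after Stirling kills the $n^n$. One small imprecision to flag: the quantity you call $C_\alpha$ is $1/\prod_{r=1}^k\alpha_r$, which is not a fixed constant but grows like $\alpha_{\min}^{-k}$; this is still absorbed into $e^{Cn\log k}$ because $k\le n$ and, for $k\ge 2$, $n\lesssim n\log k$ (with $k=1$ being trivial since then the ratio is identically one), but it deserves an explicit remark rather than being folded silently into a ``$\lesssim$''. The concluding reduction of $n$, $k\log(n/k)$, and $O(\log n)$ into $Cn\log k$ via the monotonicity of $\log x/x$ on $[e,\infty)$ is fine, with the finitely many small-$n$, $k=2$ cases handled by enlarging $C$.
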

\begin{proof}
Fix $z \in \m Z_{n,k}$. From \eqref{eq:dir_mult}, $p(z)/p(z') = \prod_{r=1}^k \Gamma(n_r + \alpha_r)/\Gamma(n'_r + \alpha_r)$. Define non-negative integers $\beta_r =  \lceil \alpha_r \rceil, \gamma_r =  \lfloor \alpha_r \rfloor$, $\beta_{\cdot} = \sum_{r=1}^k \beta_r, \gamma_{\cdot} = \sum_{r=1}^k \gamma_r$. Recall the following facts about the Gamma function: (i) $\Gamma(x)$ is decreasing on $(0, 1)$ with $1/(2x) \le \Gamma(x) \le 1/x$, (ii) $\Gamma(x) \le 1$ for $x \in [1, 2]$, $\Gamma(1)= \Gamma(2) =1$ and (iii) $\Gamma(x)$ is increasing for $x \ge 2$. First, we claim $\Gamma(n_r' + \alpha_r) \ge C' \Gamma(n_r' + \gamma_r)$ for all $r$ for an absolute constant $C' > 0$. To see this, recalling that $n_r' \ge 1$, separately consider the cases (a) $n_r' \ge 2$, (b) $n_r' = 1$ and $\alpha_r > 1$ and (c) $n_r' = 1$ and $\alpha_r < 1$. Cases (a) and (b) follow from fact (iii) above with $C' = 1$. For case (c), $\Gamma(n_r' + \alpha_r) = \Gamma(1 + \alpha_r) = \alpha_r \Gamma(\alpha_r) \ge 1/2$ by fact (i) and $\Gamma(n_r' + \gamma_r) = \Gamma(1) = 1$; therefore one may choose $C' = 1/2$ here.  Next, we claim that $\Gamma(n_r + \alpha_r) \le C \Gamma(n_r + \beta_r)$ for all $r$ and an absolute constant $C > 0$. To see this, separately consider cases (a) $n_r \ge 1$, (b) $n_r = 0$ and $\alpha_r \ge 1$ and (c) $n_r = 0$ and $\alpha_r < 1$. Cases (a) and (b) once again follow from fact (iii) above with $C = 1$. For case (c), $\Gamma(n_r + \alpha_r) = \Gamma(\alpha_r)$ and $\Gamma(n_r + \beta_r) = \Gamma(1) = 1$, so one may choose $C = \max_{1 \le r \le k} \{1/\Gamma(\alpha_r)\}$. We thus have
\begin{align}\label{eq:rat_1}
\frac{p(z)}{p(z')} & = \prod_{r=1}^k \frac{\Gamma(n_r + \alpha_r)}{\Gamma(n_r' + \alpha_r)} \le \bigg(\frac{C}{C'}\bigg)^k \prod_{r=1}^k  \frac{\Gamma(n_r + \beta_r)}{\Gamma(n_r' + \gamma_r)} \nonumber \\
&= \bigg(\frac{C}{C'}\bigg)^k \, \frac{(n + \beta_{\cdot} - k)!}{(n + \gamma_{\cdot} - k)!}  \,  \frac{ {n +\gamma_{\cdot} - k  \choose
 n_1' + \gamma_1 -1, \ldots, n_k'+ \gamma_k -1} }{  {n +\beta_{\cdot} - k  \choose
 n_{1} + \beta_1 -1, \ldots, n_{k}+ \beta_k -1} },
\end{align}
where we used $\sum_{r=1}^k n_r = \sum_{r=1}^k n_r' = n$ and ${n \choose m_1, \ldots, m_k} := n!/(m_1 ! \ldots m_k !) \ind(m_1 + \ldots m_k = n)$ is the multinomial coefficient. 
Since the multinomial coefficient as a function of $m_1, \ldots, m_{k-1}$, $(m_1, \ldots, m_{k-1}) \mapsto {m \choose m_1, \ldots, m_k}$, attains
 its minimum if $m_r = m$ for some $r$ and $m_l =0$ for $l \neq r$, and maximum if $m_r = m/k$ for all $r = 1, \ldots, k$, we have from \eqref{eq:rat_1},
\begin{align*}
\frac{p(z)}{p(z')} \le  \bigg(\frac{C}{C'}\bigg)^k \, \frac{(n + \beta_{\cdot} - k)!}{(n + \gamma_{\cdot} - k)!}  \frac{(n+ \gamma_{\cdot} - k)!}{[\{ (n  + \beta_{\cdot} - k )/k\}!]^k}  = \bigg(\frac{C}{C'}\bigg)^k  \frac{(n + \beta_{\cdot} - k)!}{[\{ (n  + \beta_{\cdot} - k )/k\}!]^k}.
\end{align*}
Set $m = n + \gamma_{\cdot} - k$. Since $m! \asymp (m/e)^m \sqrt{m}$ by Stirling's bound, $m!/\{(m/k)!\}^k \lesssim k^m \sqrt{m} (k/m)^{k/2} \lesssim k^{C n}$ for $C > 1$ large enough. The conclusion follows. 

\end{proof}

\section{Posterior convergence rates in Stochastic Block Models}
We are interested in concentration properties of the posterior $\Pi_n(\cdot \mid A)$ assuming the true data-generating parameter $\theta^0 \in \Theta_k$. To measure the discrepancy in the estimation of $\theta^0 \in \Theta_k$, the mean squared error has been used in the frequentist literature,
\begin{align}\label{eq:diet}
\frac{1}{n^2} \sum_{i=1}^n \sum_{j=1}^n (\hat{\theta}_{ij} - \theta_{ij}^0)^2 =  \frac{1}{n^2}
 \norm{\hat{\theta} - \theta^0}^2,
\end{align}
where $\hat{\theta}$ is an estimator of $\theta^0$.  \cite{chatterjee2014matrix} proposed estimating $\theta^0$ using a low rank decomposition of the adjacency matrix $A$ followed by a singular value decomposition to obtain a convergence rate of $\sqrt{k/n}$. More recently, \cite{gao2014rate} considered a least squares type approach which can be related to maximum likelihood estimation where the Bernoulli likelihood is replaced by a Gaussian likelihood. They obtained a rate of $k^2/n^2 + \log k/n$, which they additionally showed to be the minimax rate over $\Theta_k$, i.e.,
\begin{align}\label{eq:minmax}
\inf_{\hat{\theta}} \sup_{\theta^0 \in \Theta_k} \bbE_{\theta_0} \frac{1}{n^2}
 \norm{\hat{\theta} - \theta^0}^2 \asymp \frac{k^2}{n^2} + \frac{\log k}{n}.
\end{align} 
Interestingly, the minimax rate has two components, $k^2/n^2$ and $\log k/n$. \cite{gao2014rate} refer to the $k^2/n^2$ term in the minimax rate as the {\em nonparametric rate}, since it arises from the need to estimate $k^2$ unknown elements in $Q$ from $n^2$ observations. The second part, $\log k/n$, is termed as the {\em clustering rate}, which appears since the clustering configuration $z$ is unknown and needs to be estimated from the data. Observe that the clustering rate grows logarithmically in $k$. Parameterizing $k = n^{\delta}$ with $\delta \in [0, 1]$, the interplay between the two components becomes clearer (refer to equation 2.6 of \cite{gao2014rate}); in particular, the clustering rate dominates when $k$ is small and the nonparametric rate dominates when $k$ is large. 

To evaluate Bayesian procedures from a frequentist standpoint, one seeks for the minimum possible sequence $\epsilon_n \to 0$ such that the posterior probability assigned to the complement of an $\epsilon_n$-neighborhood (blown up by a constant factor) of $\theta^0$ receives vanishingly small probabilities. The smallest such $\epsilon_n$ is called the {\em posterior convergence rate} \cite{ghosal2000convergence}. There is now a growing body of literature showing that Bayesian procedures achieve the frequentist minimax rate of posterior contraction (up to a logarithmic term) in  models where the parameter dimension 
grows with the sample size; see \cite{bontemps2011bernstein,castilloneedles,pati2014posterior,banerjee2014posterior,van2014horseshoe,castillo2015bayesian} for some flavor of the recent literature. 



We now state the main result of this article where we derive the convergence rate of the posterior arising from the hierarchical formulation \eqref{eq:sbm1} -- \eqref{eq:sbm4}. 
\begin{theorem} \label{thm:sbm_nonadaptive}
Assume $A = (A_{ij})$ is generated from a $k$-component stochastic block model \eqref{eq:sBm} with the true data-generating parameter $\theta^0 = (\theta^0_{ij}) \in \Theta_k$, where $\Theta_k$ is as in \eqref{eq:theta_k}. Further assume that there exists a small constant $\delta \in (0, 1)$ such that $ \theta^0_{ij} \in(\delta,  1 - \delta)$ for all $i, j = 1, \ldots, n$. Suppose the hierarchical Bayesian model \eqref{eq:sbm1} -- \eqref{eq:sbm4} is fitted. Then, with $\epsilon_n^2 = k^2\log(n/k)/ n^2  + \log k/n$ and a sufficiently large constant $M > 0$, 
\begin{align}\label{eq:post_conc}
\lim_{n \to \infty} \bbE_{\theta^0}  \Pi_n \bigg \{ \frac{1}{n^2} \norm{\theta - \theta^0}^2 > M^2 \epsilon_n^2  \mid A \bigg \} = 0.
\end{align}
\end{theorem}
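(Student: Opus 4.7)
The plan is to apply the Ghosal--Ghosh--van der Vaart (GGV) posterior contraction machinery, viewing the adjacency matrix as $N = n^2$ independent Bernoulli observations so that the effective posterior rate is $N \epsilon_n^2 = k^2 \log(n/k) + n \log k$. Since all $\theta_{ij}^0 \in (\delta, 1-\delta)$, for product Bernoulli laws the Kullback--Leibler divergence, its second-moment variant, the squared Hellinger distance, and $\|\theta - \theta^0\|^2$ are equivalent up to constants depending only on $\delta$, so everything can be argued in the Euclidean metric on $\bbR^{n \times n}$.

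For the prior mass condition I would choose a representation $\theta^0 = \theta^{z^0, Q^0}$ with all cluster sizes $n_r^0 \ge 1$ and use \eqref{eq:prior_U} to bound $\Pi\{\theta : \|\theta - \theta^0\|^2 \le n^2 \epsilon_n^2\}$ below by $p(z^0)\cdot p\{Q : \|\theta^{z^0, Q} - \theta^0\|^2 \le n^2 \epsilon_n^2\}$. Applying Lemma \ref{lem:prior_rat} with $z' = z^0$, together with $\sum_z p(z) = 1$ and $|\m Z_{n,k}| \le k^n$, yields $p(z^0) \ge e^{-Cn\log k}$. Within the $z^0$-slice, $\|\theta^{z^0, Q} - \theta^{z^0, Q^0}\|^2 = \sum_{r,s} n^0_r n^0_s (Q_{rs} - Q^0_{rs})^2 \le n^2 \max_{r,s}(Q_{rs} - Q^0_{rs})^2$, so requiring $\max_{r,s}|Q_{rs} - Q^0_{rs}| \le \epsilon_n$ suffices and carves out a sub-box of $[0,1]^{k \times k}$ of volume $\asymp \epsilon_n^{k^2} \ge e^{-Ck^2 \log(n/k)}$, which directly lower bounds the uniform prior mass. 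Multiplying the two estimates delivers the required $e^{-Cn^2\epsilon_n^2}$ concentration.

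The core difficulty is constructing the exponentially consistent tests, and this is where the geometric embedding the authors advertise enters. For each fixed $z$, the slice $\{\theta^{z, Q} : Q \in [0,1]^{k \times k}\}$ is the image of a $k^2$-dimensional box under the linear embedding $Q \mapsto \theta^{z, Q}$ into $\bbR^{n \times n}$; a volumetric argument shows it admits an $\epsilon$-cover in $\|\cdot\|$ by at most $(Cn/\epsilon)^{k^2}$ balls. A union over $z \in \m Z_{n,k}$ then yields $\log N(\epsilon, \Theta_k, \|\cdot\|) \lesssim n\log k + k^2 \log(n/\epsilon)$, which at $\epsilon \asymp n\epsilon_n$ is $\lesssim n^2 \epsilon_n^2$. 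Le Cam--Birge tests between product Bernoulli laws, combined with the Hellinger--Euclidean equivalence, give tests at each covering centre with error $\lesssim e^{-cn^2\epsilon_n^2}$ against $\{\theta : \|\theta - \theta^0\|^2 \ge M^2 n^2 \epsilon_n^2\}$, and a union bound produces a global test with error $\lesssim e^{-(cM^2 - C)n^2 \epsilon_n^2}$, negligible for $M$ large.

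The main obstacle is matching both components of $\epsilon_n^2$ tightly and simultaneously. The clustering term $\log k / n$ forces the combinatorial factor $|\m Z_{n,k}| = k^n$ to be paid only at $\log k$ per node on both the prior side (via Lemma \ref{lem:prior_rat}) and the testing side (via the union bound over $z$), while the nonparametric term $k^2 \log(n/k)/n^2$ forces the covering number of each $z$-slice in the unnormalized Euclidean metric to scale as $(Cn/\epsilon)^{k^2}$, the $n$-factor reflecting the ambient scale of the linear embedding rather than any intrinsic unit-scale cost. Once these are aligned, assembling prior mass, entropy, and tests through the GGV theorem produces \eqref{eq:post_conc}.
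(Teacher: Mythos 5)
Your overall architecture matches the paper's: a denominator lower bound from prior small--ball mass, a covering/peeling construction of exponentially consistent tests with a union bound over $\m Z_{n,k}$, and the Dirichlet--multinomial ratio Lemma~\ref{lem:prior_rat} to pay the $n\log k$ cost on both sides. Your prior--mass argument --- lower bounding $p(z^0)$ via Lemma~\ref{lem:prior_rat}, then embedding a coordinate sub-box of $[0,1]^{k\times k}$ of side $\epsilon_n$ inside the small ball on the $z^0$-slice using \eqref{eq:reduct_1} and the hypothesis $Q^0_{rs}\in(\delta,1-\delta)$ --- is essentially identical to the paper's (equations \eqref{eq:denom_1}--\eqref{eq:denom_2}). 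The geometric embedding you gesture at is precisely what Lemma~\ref{lem:ball} and Corollary~\ref{cor:ellip1} formalize.

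The genuine gap is in the test construction. You propose Le Cam--Birg\'e tests in Hellinger distance combined with ``Hellinger--Euclidean equivalence,'' but that equivalence is one-sided here. For Bernoulli parameters $p,q\in[0,1]$ one always has $h^2(p,q)\ge (p-q)^2/2$, which is enough to get the right exponent $e^{-cn^2\epsilon_n^2}$ for the type I/II errors against a fixed alternative. But to let a single Le Cam test handle an entire Euclidean covering ball around a centre $\theta^1$, you need the reverse bound $h^2(p,q)\lesssim (p-q)^2$, and that fails when $\theta^1$ is near the boundary of $[0,1]^{n\times n}$. The hypothesis of the theorem bounds only $\theta^0$, not the alternatives, away from $\{0,1\}$; the uniform prior on $Q$ places mass arbitrarily close to the boundary, so covering centres with entries near $0$ or $1$ cannot be excluded. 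Consequently a Euclidean $\epsilon$-net at scale $\epsilon\asymp n\epsilon_n$ does not automatically yield Hellinger balls of the required radius, and your entropy calculation $\log N(\epsilon,\Theta_k,\|\cdot\|)\lesssim n\log k + k^2\log(n/\epsilon)$, while correct in the Euclidean metric, is not the quantity the Le Cam machinery needs. One could patch this by covering at a much finer Euclidean scale (of order $n\epsilon_n^2$ rather than $n\epsilon_n$, since $d_H^2\le \|\cdot\|_1\le n\|\cdot\|$), and the logarithm happens to remain of the right order, but this is not what ``Hellinger--Euclidean equivalence'' delivers. The paper avoids the issue entirely: Lemma~\ref{lem:test} builds the test from the linear statistic $\sum_{i,j}(\theta^1_{ij}-\theta^0_{ij})(A_{ij}-\theta^0_{ij})$ and Hoeffding's inequality, giving error $e^{-C\|\theta^1-\theta^0\|^2}$ with type II control over the \emph{Euclidean} ball $\{\|\theta-\theta^1\|\le\|\theta^1-\theta^0\|/2\}$, so the covering and the test live in the same metric with no boundary issues. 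This also lets Lemma~\ref{lem:test_ann} exploit the ellipsoid structure of Corollary~\ref{cor:ellip1} to cover each annulus $U_{l,n}(z)$ by only $9^{k^2}$ balls, independent of $n$ and $l$, which is cleaner than the $(Cn/\epsilon)^{k^2}$ bound you quote. If you replace the Hellinger step with a direct Hoeffding test, your proposal aligns with the paper's proof.
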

\begin{rem}
Since $\theta^0 \in \Theta_k$, following the discussion after \eqref{eq:theta_k}, there exists $z^0 \in \m Z_{n, k}$ and $Q^0 \in [0, 1]^{k \times k}$ such that $\theta^0 = \theta^{z^0, Q^0}$. The condition of the theorem posits that all entries of $Q^0$ lie in $(\delta, 1 - \delta)$. The assumption $\theta^0 \in \Theta_k$ also implicitly implies that all the clusters have at least one observation, i.e., $n^0_r = \sum_{i=1}^n \ind(z^0_i = r) \ge 1$ for all $r = 1, \ldots, k$; otherwise $\theta^0 \in \Theta_l$ for some $l < k$.
\end{rem}
A proof of Theorem \ref{thm:sbm_nonadaptive} can be found in Section 5. Theorem \ref{thm:sbm_nonadaptive} shows that the posterior contracts at a (near) minimax rate of $k^2\log(n/k)/n^2 + \log k/n$. The nonparametric component of the rate is slightly hurt by a logarithmic term; appearance of such an additional logarithmic term is common in Bayesian nonparametrics. An inspection of the proof additionally reveals that the techniques can be trivially extended to undirected graphs with or without self-loops and will produce the same rate.

%
\subsection{Geometry of $\Theta_k$}

In this section, we derive a number of auxiliary results aimed at understanding the geometry of the parameter space $\Theta_k$. These results are used to prove Theorem \ref{thm:sbm_nonadaptive} and can be possibly of independent interest. 

We first state a testing lemma which harnesses the ability of the likelihood to separate points in the parameter space. 
\begin{lemma}\label{lem:test}
Assume $\theta^0 \ne \theta^1 \in \Theta_k$ and let $E = \{\theta \in [0, 1]^{n \times n} : \norm{\theta - \theta^0} \le \norm{\theta^1 - \theta^0}/2\}$ be an Euclidean ball of radius $\norm{\theta^1 - \theta^0}/2$ around $\theta^1$ inside $[0, 1]^{n \times n}$. Based on $A_{ij} \stackrel{\text{ind}}\sim \mbox{Bernoulli}(\theta_{ij})$ for $i, j = 1, \ldots, n$, consider testing 
$H_0: \theta = \theta^0 \, \mr{versus} \, H_1: \theta \in E$.
There exists a test function $\Phi$ such that 
\begin{align}\label{eq:pt_vs_pt}
\bbE_{\theta^0} (\Phi)  \le \exp\{- C_1 \norm{\theta^1 - \theta^0}^2 \},  \quad 
\sup_{\theta \in E } \bbE_{\theta} (1- \Phi)  \le \exp\{- C_2 \norm{\theta^1 - \theta^0}^2 \},
\end{align}
for constants $C_1, C_2 > 0$ independent of $n, \theta^1$ and $\theta^0$. 
\end{lemma}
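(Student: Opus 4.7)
The set $E$ as printed contains $\theta^0$ and so admits no nontrivial test; I read the definition in light of the phrase ``around $\theta^1$'' as $E = \{\theta \in [0,1]^{n\times n} : \norm{\theta - \theta^1} \le \norm{\theta^1-\theta^0}/2\}$, so that $\norm{\theta - \theta^0} \ge \norm{\theta^1 - \theta^0}/2$ for every $\theta \in E$ by the triangle inequality. Under this reading my plan is the classical Neyman--Pearson-style linear test in the direction $\eta := \theta^1 - \theta^0$. Concretely, I take
\begin{align*}
T = \frac{1}{\norm{\eta}} \sum_{i,j=1}^n \eta_{ij}(A_{ij} - \theta^0_{ij}), \qquad \Phi = \ind\{T > \norm{\eta}/4\}.
\end{align*}
Both halves of \eqref{eq:pt_vs_pt} will come from Hoeffding's inequality applied to the independent, bounded summands of $T$; each summand has range $|\eta_{ij}|/\norm{\eta}$, and $\sum_{i,j} \eta_{ij}^2/\norm{\eta}^2 = 1$, so the Hoeffding variance proxy is a universal constant.

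For the Type~I error, under $\theta^0$ the statistic $T$ is centered, and Hoeffding gives $\bbP_{\theta^0}(T > \norm{\eta}/4) \le \exp(-\norm{\eta}^2/8)$ directly. For the Type~II error I will first verify that under any $\theta \in E$ the mean of $T$ is well separated from the threshold: a direct computation gives $\bbE_\theta T = \norm{\eta} + \langle \theta - \theta^1, \eta\rangle/\norm{\eta}$, and Cauchy--Schwarz together with $\norm{\theta - \theta^1} \le \norm{\eta}/2$ yield $\bbE_\theta T \ge \norm{\eta}/2$. A second application of Hoeffding (to $-T$, whose summands have the same ranges) then gives $\bbP_\theta(T \le \norm{\eta}/4) \le \exp(-\norm{\eta}^2/8)$ uniformly over $\theta \in E$.

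The constants $C_1 = C_2 = 1/8$ produced this way are universal and free of $n, \theta^0, \theta^1$, as required. I do not foresee any genuine obstacle beyond balancing the two buffers correctly: the radius of $E$ is $\norm{\eta}/2$ and the rejection threshold is $\norm{\eta}/4$, so both the null mean ($0$) and the worst-case alternative mean ($\ge \norm{\eta}/2$) are separated from the threshold by at least $\norm{\eta}/4$, which is precisely what forces Hoeffding to return the same quadratic exponent on both sides. The argument uses nothing about the block structure of $\theta^0$ or $\theta^1$; it is a property of product-Bernoulli models on $n^2$ coordinates with parameters in $[0,1]$.
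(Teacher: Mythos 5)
Your proof is correct and follows essentially the same route as the paper: the authors also use the linear test statistic $\sum_{i,j}(\theta^1_{ij}-\theta^0_{ij})(A_{ij}-\theta^0_{ij})$ with threshold $\norm{\theta^1-\theta^0}^2/4$ (your $T$ is just this statistic normalized by $\norm{\eta}$), bound the type~I error by Hoeffding, and handle the type~II error via Cauchy--Schwarz plus Hoeffding using $\norm{\theta-\theta^1}\le\norm{\theta^1-\theta^0}/2$. You also correctly spotted that the printed definition of $E$ should center the ball at $\theta^1$ rather than $\theta^0$; the paper's own proof indeed uses the bound $\norm{\theta-\theta^1}\le\norm{\theta^1-\theta^0}/2$, confirming this reading.
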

\begin{proof}
Define the test function $\Phi$ as 
$$
\Phi =  \ind \bigg\{ \sum_{i=1}^n \sum_{j=1}^n ( \theta^1_{ij}  -  \theta^0_{ij} ) (A_{ij} - \theta^0_{ij})  >  \norm{\theta^1 - \theta^0}^2/4 \bigg\},
$$
where $\ind(\cdot)$ denotes the indicator of a set. We show below that this test has the desired error rates \eqref{eq:pt_vs_pt}.

We first bound the type-I error $\bbE_{\theta^0} (\Phi)$. Noting that under $\bbP_{\theta^0}$, $(A_{ij} - \theta^0_{ij})$ are independent zero mean random variables with $|A_{ij} - \theta^0_{ij}| < 1$, we use a version of 
Hoeffding's inequality (refer to Proposition 5.10 of \cite{vershynin2010introduction}) to conclude that,  
\begin{align*}
\bbE_{\theta^0} (\Phi) &= \bbP_{\theta^0} \bigg\{ \sum_{i=1}^n \sum_{j=1}^n ( \theta^1_{ij}  -  \theta^0_{ij} ) (A_{ij} - \theta^0_{ij})   >  \norm{\theta^1 - \theta^0}^2/4 \bigg\} \\
& \le  \exp \bigg\{-C_1 \frac{\norm{\theta^1 - \theta^0}^4}{\norm{\theta^1 - \theta^0}^2}\bigg\} = \exp\big\{-C_1 \norm{\theta^1 - \theta^0}^2 \big\}
\end{align*}
for a constant $C_1 > 0$ independent of $n, \theta^1$ and $\theta^0$. 

We next bound the type-II error $\sup_{\theta \in E} \bbE_{\theta}(1 - \Phi)$. Fix $\theta \in E$. We have,
\begin{align}
\bbE_{\theta}(1 - \Phi) 
& = \bbP_{\theta} \bigg\{\sum_{i=1}^n \sum_{j=1}^n ( \theta^1_{ij}  -  \theta^0_{ij} ) (A_{ij} - \theta^0_{ij})   < \norm{\theta^1 - \theta^0}^2/4 \bigg\} \notag \\
& = \bbP_{\theta} \bigg\{\sum_{i=1}^n \sum_{j=1}^n ( \theta^1_{ij}  -  \theta^0_{ij} ) (A_{ij} - \theta_{ij})   < \norm{\theta^1 - \theta^0}^2/4 -  \dotp{\theta^1 - \theta^0}{\theta - \theta^0}  \bigg\}, \label{eq:type_tw}
\end{align}
where we abbreviate $\dotp{\theta'}{\theta''} = \sum_{i=1}^n \sum_{j=1}^n \theta'_{ij} \theta''_{ij}$. Bound
\begin{align*}
&\dotp{\theta^1 - \theta^0}{\theta - \theta^0} \\
& = \dotp{\theta^1 - \theta^0}{\theta^1 - \theta^0} - \dotp{\theta^1 - \theta^0}{\theta^1 - \theta} \\
& \ge \norm{\theta^1 - \theta^0}^2 - \norm{\theta^1 - \theta^0}^2/2 = \norm{\theta^1 - \theta^0}^2/2,
\end{align*}
where the penultimate step used the Cauchy--Schwartz inequality along with the fact that $\norm{\theta - \theta^1} \le \norm{\theta^1 - \theta^0}/2$. Substituting in \eqref{eq:type_tw} and noting that under $\bbP_{\theta}$, $(A_{ij} - \theta_{ij})$ are independent zero mean bounded random variables, another application of Hoeffding's inequality yields
\begin{align*}
\bbE_{\theta} (1 - \Phi) & \le \bbP_{\theta} \bigg\{\sum_{i=1}^n \sum_{j=1}^n ( \theta^1_{ij}  -  \theta^0_{ij} ) (A_{ij} - \theta_{ij})   < - \norm{\theta^1 - \theta^0}^2/4 \bigg\} \\
& \le \exp \bigg\{-C_2 \frac{\norm{\theta^1 - \theta^0}^4}{\norm{\theta^1 - \theta^0}^2}\bigg\} = \exp\big\{-C_2 \norm{\theta^1 - \theta^0}^2 \big\} 
\end{align*}
for some constant $C_2 > 0$ independent of $n$ and $\theta$. Taking a supremum over $\theta \in E$ yields the desired result.
\end{proof}

Our next result is concerned with the structure of a specific type of Euclidean balls inside $\Theta_k$. Recall that $\theta^{z, Q}$ denotes the element of $\Theta_k$ with $\theta^{z, Q}_{ij} = Q_{z_i z_j}$. For $z \in \m Z_{n, k}$, let 
\begin{align}\label{eq:theta_kz}
\Theta_k(z) = \big\{ \theta^{z, Q} : Q \in [0, 1]^{k \times k} \big\}
\end{align}
denote a slice of $\Theta_k$ along $z$. In other words, given $z$, $\Theta_k(z)$ is the image of the map $Q \mapsto \theta^{z, Q}$ in $\Theta_k$. Suppose $\theta^* = \theta^{z^*, Q^*} \in \Theta_k$, and   consider a ball $B(z)$ in $\Theta_k(z)$ centered at $\theta^*$ of the form $B(z) = \big\{\theta \in \Theta_k(z) : \norm{\theta - \theta^*} < t \big\}$ for some $t > 0$. If $z^* = z$, then it is straightforward to observe that
\begin{align}\label{eq:reduct_1}
\norm{\theta^{z,Q} - \theta^{z^*, Q^*}}^2 = \sum_{r=1}^k \sum_{s=1}^k n_r n_s (Q_{rs} - Q_{rs}^*)^2,
\end{align}
where recall that $n_r = \sum_{i=1}^n \ind(z_i = r)$ for $r = 1, \ldots, k$. Therefore, although a subset of $[0, 1]^{n \times n}$, $B(z)$ can be identified with a $k^2$-dimensional ellipsoid in $[0,1]^{k \times k}$. When $z^* \ne z$, one no longer has a nice identity as above and the geometry of $B(z)$ is more difficult to describe. However, we show below in Lemma \ref{lem:ball} that $B(z)$ is always contained inside a set $\wt{B}(z)$ in $\Theta_k(z)$ which can be identified with a $k^2$-dimensional ellipsoid in $[0,1]^{k \times k}$. Recall our convention for describing ellipsoids from \eqref{eq:ellipsoid}. 
\begin{lemma}\label{lem:ball}
Fix $z^* \in \m Z_{n, k}, Q^* \in [0, 1]^{k \times k}$, and let $\theta^* = \theta^{z^*, Q^*}$. For $z \in \m Z_{n, k}$ and $t > 0$, let $B(z) = \big\{\theta \in \Theta_k(z) : \norm{\theta - \theta^*} < t \big\}$. 
Set $W_{rs} = n_r n_s/t^2$ and $W = (W_{rs})$, where $n_r = \sum_{i=1}^n \ind(z_i = r)$ for $r = 1, \ldots, k$. Then, $B(z) \subseteq \wt{B}(z)$, where 
\begin{align}
\wt{B}(z) = \big\{ \theta^{z, Q}: Q \in \xi_{k^2}(\bar{Q}^*, W) \, \cap \, [0, 1]^{k \times k} \big\}
\end{align}
for some $\bar{Q}^* \in [0, 1]^{k \times k}$ depending on $Q^*, z^*$ and $z$. 
In particular, if $z^* = z$, then $\bar{Q}^* = Q^*$ and the containment becomes equality, i.e., $B(z) = \wt{B}(z)$.
\end{lemma}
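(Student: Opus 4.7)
The plan is to pull the $\ell_2$ distance on $\theta$'s back to a (shifted) weighted $\ell_2$ distance on $Q$'s by grouping the $(i,j)$ indices according to their $z$-labels and completing the square in each block. For any $Q \in [0,1]^{k \times k}$, every entry of $\theta^{z,Q}$ lying in the $(r,s)$-block of $z$ (i.e.\ with $z_i = r, z_j = s$) equals the single scalar $Q_{rs}$, while the corresponding entries of $\theta^* = \theta^{z^*,Q^*}$ equal $Q^*_{z^*_i z^*_j}$ and can take several different values depending on how $z^*$ restricted to this $(r,s)$-block behaves. So I first define $n_{rs,r's'} = \#\{(i,j) : z_i = r, z_j = s, z^*_i = r', z^*_j = s'\}$, noting $\sum_{r',s'} n_{rs,r's'} = n_r n_s$, and write
\[
\norm{\theta^{z,Q} - \theta^{z^*,Q^*}}^2 = \sum_{r=1}^k \sum_{s=1}^k \sum_{r'=1}^k \sum_{s'=1}^k n_{rs,r's'} \, (Q_{rs} - Q^*_{r's'})^2.
\]

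Next I would fix $(r,s)$ and minimize the inner double sum over the scalar $Q_{rs}$. The minimizer is the weighted average
\[
\bar{Q}^*_{rs} \;=\; \frac{1}{n_r n_s} \sum_{r'=1}^k \sum_{s'=1}^k n_{rs,r's'} \, Q^*_{r's'},
\]
which, as a convex combination of entries of $Q^*$, automatically lies in $[0,1]$, so $\bar{Q}^* := (\bar{Q}^*_{rs}) \in [0,1]^{k \times k}$. A short completion-of-squares computation (the cross term vanishes by the defining property of the weighted mean) gives the key identity
\[
\sum_{r',s'} n_{rs,r's'} (Q_{rs} - Q^*_{r's'})^2 \;=\; n_r n_s (Q_{rs} - \bar{Q}^*_{rs})^2 \;+\; \sum_{r',s'} n_{rs,r's'}(\bar{Q}^*_{rs} - Q^*_{r's'})^2,
\]
where the second term is a nonnegative constant $C_{rs}$ independent of $Q$. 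Summing over $(r,s)$ yields $\norm{\theta^{z,Q} - \theta^*}^2 = \sum_{r,s} n_r n_s (Q_{rs} - \bar{Q}^*_{rs})^2 + C$ with $C \ge 0$.

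From this identity the containment is immediate: if $\theta^{z,Q} \in B(z)$, i.e.\ $\norm{\theta^{z,Q} - \theta^*} < t$, then dropping the nonnegative constant $C$ gives $\sum_{r,s} (n_r n_s / t^2)(Q_{rs} - \bar{Q}^*_{rs})^2 < 1$, which is precisely the defining inequality of $\xi_{k^2}(\bar{Q}^*, W)$ with $W_{rs} = n_r n_s/t^2$. Combined with $Q \in [0,1]^{k \times k}$, this places $\theta^{z,Q}$ in $\wt{B}(z)$. Finally, when $z^* = z$ one has $n_{rs,r's'} = n_r n_s \, \mathbbm{1}\{(r',s') = (r,s)\}$, so $\bar{Q}^*_{rs} = Q^*_{rs}$, the constant $C$ vanishes, and the identity reduces to \eqref{eq:reduct_1}; this makes the containment an equality and recovers the case already noted in the text.

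The only genuine subtlety is picking the right center $\bar{Q}^*$ when $z \ne z^*$: entries of $\theta^{z,Q}$ across a single $(r,s)$-block of $z$ are forced to be constant, while $\theta^*$ is not constant on that block, so no choice of $Q$ can reproduce $\theta^*$ exactly on that block; the weighted-average construction above produces the smallest possible residual, which is what allows the remainder to be absorbed into the nonnegative constant $C$ and dropped. Once that observation is in place, the rest is just algebra.
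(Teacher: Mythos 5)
Your proof is correct and takes essentially the same route as the paper: both single out the $(r,s)$-block of indices induced by $z$, observe that $\theta^{z,Q}$ is constant on each block while $\theta^*$ is not, and define $\bar Q^*_{rs}$ as the weighted average of the $Q^*_{r's'}$ over the $(r',s')$-cells that intersect that block. Your count $n_{rs,r's'}$ factors as $n_{r,r'} n_{s,s'}$ (the constraints on $i$ and on $j$ are independent), so it is exactly the paper's weight, and your $\bar Q^*$ is exactly the paper's \eqref{eq:q_starbar}. The one genuine difference is the last step: you establish nonnegativity of the residual via the variance/completion-of-squares decomposition
\[
\sum_{r',s'} w_{r's'}(Q_{rs}-Q^*_{r's'})^2 = \Bigl(\sum w_{r's'}\Bigr)(Q_{rs}-\bar Q^*_{rs})^2 + \sum_{r',s'} w_{r's'}(\bar Q^*_{rs}-Q^*_{r's'})^2,
\]
whose leftover term is manifestly a nonnegative sum of squares, whereas the paper expands the square and then invokes Cauchy--Schwarz to verify nonnegativity of $\sum_{i,j}(Q^*_{z^*_iz^*_j})^2 - \sum_{r,s} n_r n_s(\bar Q^*_{rs})^2$. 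These are equivalent arguments (the Cauchy--Schwarz instance used there is precisely the mean-minimizes-weighted-SSE fact), but your version is the more transparent of the two since nonnegativity drops out structurally rather than requiring a separate inequality; it also never actually needs the product structure $n_{rs,r's'}=n_{r,r'}n_{s,s'}$, only that the weights are nonnegative and sum to $n_r n_s$.
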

\begin{rem}\label{rem:ellip}
From \eqref{eq:ellipsoid}, $\xi_{k^2}(\bar{Q}^*, W)$ in Lemma \ref{lem:ball} is the collection of all $Q$ satisfying $\sum_{r=1}^k \sum_{s=1}^k n_r n_s(Q_{rs} - \bar{Q}_{rs}^*)^2 < t^2$. The last part of Lemma \ref{lem:ball} is consistent with the discussion preceding \eqref{eq:reduct_1}. When $z^* = z$, \eqref{eq:reduct_1} implies that $B(z)$ consists of all $\theta^{z, Q}$ with $Q \in [0, 1]^{k \times k}$ satisfying $\sum_{r=1}^k \sum_{s=1}^k n_r n_s (Q_{rs} - Q_{rs}^*)^2 < t^2$. 
\end{rem}
\begin{proof}
We begin by constructing $\bar{Q}^*$. For $1 \le r, r' \le k$, let $I_ {r, r'} =  z^{-1}(r) \cap  (z^*)^{-1}(r')$ and $n_{r, r'} = |I_{r, r'}|$. Clearly $\{ I_ {r, r'}, r' = 1, \ldots, k \}$ is a partition of $z^{-1}(r)$ and hence $n_r = \sum_{r'=1}^k n_{r, r'}$. With these notations, define 
\begin{align}\label{eq:q_starbar}
\bar{Q}_{rs}^* = \frac{1}{n_r n_s} \sum_{r' = 1}^k \sum_{s'=1}^k n_{r,r'} n_{s,s'} Q_{r',s'}^*.
\end{align}
Clearly, $\bar{Q}_{rs}^*$ is a weighted average of $Q_{r',s'}^*$ with weights proportional to $n_{r,r'} n_{s,s'}$ and therefore $\bar{Q}^* \in [0, 1]^{k \times k}$. For $\theta^{z, Q} \in \Theta_k(z)$, we have
$$
\norm{\theta^{z,Q} - \theta^{z^*,Q^*}}^2 = \sum_{i=1}^n \sum_{j=1}^n \big(Q_{z_i z_j} - Q^*_{z_i^* z_j^*} \big)^2.
$$
Expanding the squares, the term $\sum_{i=1}^n \sum_{j=1}^n Q_{z_i z_j}^2 = \sum_{r=1}^k \sum_{s=1}^k n_r n_s Q_{rs}^2$. 
The cross product term can be simplified to 
\begin{align}\label{eq:cp}
\sum_{i=1}^n \sum_{j=1}^n Q_{z_i z_j} Q^*_{z_i^* z_j^*} &= \sum_{r=1}^k \sum_{s=1}^k \sum_{r'=1}^k \sum_{s'=1}^k n_{r,r'} n_{s,s'} Q_{rs} Q_{r's'} \notag \\
& = \sum_{r=1}^k \sum_{s=1}^k n_r n_s Q_{rs} \bar{Q}^*_{rs}.
\end{align}
In view of these identities, we can write
\begin{align}
& \sum_{i=1}^n \sum_{j=1}^n \big(Q_{z_i z_j} - Q^*_{z_i^* z_j^*} \big)^2 
= \sum_{r=1}^k \sum_{s=1}^k n_r n_s (Q_{rs} - \bar{Q}^*_{rs})^2  + \notag \\
& \sum_{i=1}^n \sum_{j=1}^n ( Q^*_{z_i^* z_j^*} )^2  - \sum_{r=1}^k \sum_{s=1}^k n_r n_s (\bar{Q}^*_{rs})^2. \label{eq:differ}
\end{align}
We shall show below that the expression in \eqref{eq:differ} is non-negative. This completes the proof, since we then have
$\sum_{r=1}^k \sum_{s=1}^k n_r n_s (Q_{rs} - \bar{Q}^*_{rs})^2 \le \norm{\theta^{z,Q} - \theta^{z^*,Q^*}}^2 < t^2$ (see Remark \ref{rem:ellip}). Recalling the definition of $\bar{Q}^*_{rs}$ from \eqref{eq:q_starbar}, the expression in \eqref{eq:differ} can be written as
\begin{align*}
& \sum_{i=1}^n \sum_{j=1}^n ( Q^*_{z_i^* z_j^*} )^2  - \sum_{r=1}^k \sum_{s=1}^k n_r n_s (\bar{Q}^*_{rs})^2 \\
= & \sum_{r=1}^k \sum_{s=1}^k \bigg[\sum_{r'=1}^k \sum_{s'=1}^k n_{r,r'} n_{s,s'} (Q^*_{r's'})^2  - \frac{1}{n_r n_s} \bigg\{\sum_{r'=1}^k \sum_{s'=1}^k n_{r,r'} n_{s,s'} Q^*_{r's'} \bigg\}^2 \bigg].
\end{align*}
The non-negativity of this quantity now follows from the Cauchy--Schwartz inequality $(\sum_{\alpha} a_{\alpha} b_{\alpha})^2 \le \sum_{\alpha} a_{\alpha}^2 b_{\alpha} \sum_{\alpha} b_{\alpha}$, with $a_{\alpha} = Q^*_{r's'}$ and $b_{\alpha} = n_{r,r'} n_{s,s'}$; and the fact that $\sum_{r'=1}^k n_{r,r'} = n_r$.

When $z^* = z$, it is clear that $n_{r,r'} = \delta_{rr'}$ and hence $\bar{Q}^* = Q^*$. All subsequent inequalities then become equalities and the last part is proved. 

\end{proof}

\begin{corollary}\label{cor:ellip1}
Inspecting the proof of Lemma \ref{lem:ball}, the condition $Q \in [0, 1]^{k \times k}$ is only used to show that $bar{Q}^* \in [0, 1]^{k \times k}$. If we let $Q$ to be unrestricted, then the containment relation continues to hold as subsets of $\mb R^{k \times k}$, i.e.,  
\begin{align}
\bigg\{\theta^{z, Q} : Q \in \mb R^{k \times k}, \norm{\theta^{z, Q} - \theta^{z^*, Q^*}} < t \bigg\} \subseteq \bigg\{ \theta^{z, Q} : Q \in \xi_{k^2}(\bar{Q}^*, W) \bigg\}, 
\end{align}
with equality when $z^* = z$.
\end{corollary}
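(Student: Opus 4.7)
The plan is essentially to revisit the proof of Lemma \ref{lem:ball} and audit which steps actually invoke $Q \in [0,1]^{k \times k}$ versus which only use $Q^* \in [0,1]^{k \times k}$. The whole argument in Lemma \ref{lem:ball} reduces to (i) a polynomial identity expanding $\norm{\theta^{z,Q} - \theta^{z^*,Q^*}}^2$ and (ii) a Cauchy--Schwartz inequality guaranteeing that a certain residual quantity is non-negative. I would verify that neither of these steps is affected if one allows $Q$ to range over $\mb R^{k\times k}$.

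First, I would define $\bar{Q}^*$ exactly as in \eqref{eq:q_starbar}. Since $\bar{Q}^*$ is a convex combination of entries of $Q^*$ (with weights $n_{r,r'} n_{s,s'}/(n_r n_s)$), its membership in $[0,1]^{k\times k}$ depends \emph{only} on $Q^* \in [0,1]^{k\times k}$, which is still assumed. Thus the center $\bar{Q}^*$ of the ellipsoid $\xi_{k^2}(\bar{Q}^*, W)$ remains well-defined regardless of any restriction on $Q$. Next, I would expand $\norm{\theta^{z,Q} - \theta^{z^*,Q^*}}^2 = \sum_{i,j}(Q_{z_i z_j} - Q^*_{z_i^* z_j^*})^2$ term-by-term. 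The simplifications leading to \eqref{eq:cp} and to the decomposition \eqref{eq:differ} are purely algebraic polynomial manipulations valid for any $Q \in \mb R^{k\times k}$; no boundedness of $Q$ enters.

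Then I would reproduce the final Cauchy--Schwartz step showing that
\[
\sum_{i=1}^n \sum_{j=1}^n (Q^*_{z_i^* z_j^*})^2 \;-\; \sum_{r=1}^k \sum_{s=1}^k n_r n_s (\bar{Q}^*_{rs})^2 \;\ge\; 0,
\]
which involves only the fixed matrix $Q^*$ and the combinatorial weights $n_{r,r'}$, and is therefore untouched by removing the restriction on $Q$. Combining this with \eqref{eq:differ} gives $\sum_{r,s} n_r n_s (Q_{rs} - \bar{Q}^*_{rs})^2 \le \norm{\theta^{z,Q} - \theta^{z^*,Q^*}}^2 < t^2$, which is exactly the statement $Q \in \xi_{k^2}(\bar{Q}^*, W)$, proving the containment in $\mb R^{k\times k}$.

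Finally, for the equality statement when $z^* = z$, I would note that $n_{r,r'} = n_r \ind(r = r')$, so that $\bar{Q}^* = Q^*$ by \eqref{eq:q_starbar} and the residual in \eqref{eq:differ} collapses to zero; every inequality becomes equality and the containment is saturated. There is no real technical obstacle here; the only thing to be careful about is the bookkeeping, namely verifying that the constraint $Q \in [0,1]^{k\times k}$ was used \emph{solely} to guarantee the intersection with $[0,1]^{k\times k}$ inside the definition of $\wt B(z)$, and not at any intermediate algebraic step. Once that audit is done, the corollary follows immediately from the proof of Lemma \ref{lem:ball}.
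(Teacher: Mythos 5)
Your proposal is correct and matches the paper's reasoning exactly: the corollary is stated as an observation that the proof of Lemma~\ref{lem:ball} never uses $Q \in [0,1]^{k\times k}$ except to ensure $\bar{Q}^* \in [0,1]^{k\times k}$, which only needs $Q^* \in [0,1]^{k\times k}$. Your step-by-step audit of the algebraic expansion, the Cauchy--Schwartz residual, and the $z^* = z$ equality case is precisely the ``inspection'' the paper invokes.
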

Lemma \ref{lem:ball} and Corollary \ref{cor:ellip1} crucially exploit the lower dimensional structure underlying the parameter space $\Theta_k$ and is used subsequently multiple times. First, recall from \eqref{eq:prior_U} that one needs a handle on $p(Q : \theta^{z,Q} \in U)$ to bound the prior probability of $U \subset \Theta_k$. In particular, if $U = \{\norm{\theta - \theta^0} < t\}$, then $p(Q : \theta^{z,Q} \in U)$ equals the volume of $U \cap \Theta_k(z)$, which can be suitably bounded by the volume of the bounding $k^2$ dimensional ellipsoid. 
Second, a handle on the size of balls in $\Theta_k$ facilitates calculating the complexity of the model space (in terms of metric entropy) which is pivotal in proving the posterior concentration; in particular, to extend the test function in Lemma \ref{lem:test} to construct test functions against more complex alternatives in Lemma \ref{lem:test_ann} below. Once again, the dimensionality reduction is key to preventing the metric entropy from growing too fast.

\begin{lemma}\label{lem:test_ann}
Recall $\epsilon_n$ from Theorem \ref{thm:sbm_nonadaptive}. Assume $\theta^0 \in \Theta_k$ and for $l \ge 1$, let $U_{l, n} = \big\{ \theta \in \Theta_k : l n \epsilon_n \le \norm{\theta - \theta^0} < (l+1) n \epsilon_n \big\}$. Based on $A_{ij} \stackrel{\text{ind}}\sim \mbox{Bernoulli}(\theta_{ij})$ for $i, j = 1, \ldots, n$, consider testing 
$H_0: \theta = \theta^0 \, \mr{versus} \, H_1: \theta \in U_{l, n}$.
There exists a test function $\Phi_{l, n}$ such that 
\begin{align}\label{eq:pt_vs_comp}
\bbE_{\theta^0} (\Phi_{l, n})  \le \exp(- C_1 l^2 n^2 \epsilon_n^2),  \quad 
\sup_{\theta \in U_{l, n} } \bbE_{\theta} (1- \Phi_{l, n})  \le \exp(- C_2 l^2 n^2 \epsilon_n^2 ),
\end{align}
for constants $C_1, C_2 > 0$ independent of $n$. 
\end{lemma}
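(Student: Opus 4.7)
My plan is to build $\Phi_{l,n}$ as the maximum of point-versus-ball tests supplied by Lemma \ref{lem:test}, taken over a finite cover of $U_{l,n}$ whose cardinality is calibrated to absorb into the Hoeffding-type exponent. The crux is to exploit the low-dimensional slice structure identified in Lemma \ref{lem:ball} and Corollary \ref{cor:ellip1} so that the covering number of $U_{l,n}$ is controlled in terms of the effective degrees of freedom $n \log k + k^2$, rather than the ambient $n^2$.

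I would begin by partitioning along slices, $U_{l,n} = \bigcup_{z \in \m Z_{n,k}} U_{l,n}^z$ with $U_{l,n}^z = U_{l,n} \cap \Theta_k(z)$. For a fixed $z$, every $\theta \in U_{l,n}^z$ satisfies $\norm{\theta - \theta^0} < (l+1) n \epsilon_n$, so Lemma \ref{lem:ball} (applied with $t = (l+1)n\epsilon_n$) embeds $U_{l,n}^z$, via $Q \mapsto \theta^{z,Q}$, into the $k^2$-dimensional ellipsoid $\xi_{k^2}(\bar{Q}^0, W)$ with weights $W_{rs} = n_r n_s / \{(l+1)n\epsilon_n\}^2$. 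By the equality case of Lemma \ref{lem:ball}, a Euclidean $\theta$-ball of radius $r = l n \epsilon_n /2$ sitting inside the same slice corresponds to a congruent $Q$-ellipsoid with each axis shrunk by the common factor $l/\{2(l+1)\}$. A standard volumetric covering then produces at most $c_0^{k^2}$ such $r$-balls per slice, with $c_0 > 1$ an absolute constant (using $l \ge 1$). Summing over the $|\m Z_{n,k}| = k^n$ choices of $z$ yields a cover $\{\theta^1, \ldots, \theta^N\} \subset U_{l,n}$ with
\[
\log N \;\le\; n \log k + (\log c_0)\, k^2 \;\lesssim\; n^2 \epsilon_n^2,
\]
where the last bound uses $n^2 \epsilon_n^2 = k^2 \log(n/k) + n \log k$ together with the posterior-contraction regime $k \le n/e$ (so that the $k^2$ term is absorbed into $k^2 \log(n/k)$).

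For each $\theta^j$ in the cover, Lemma \ref{lem:test} produces a test $\Phi_j$ of $\theta^0$ against $E_j = \{\theta : \norm{\theta - \theta^j} \le \norm{\theta^j - \theta^0}/2\}$, with Type-I error at most $\exp(-C_1 \norm{\theta^j - \theta^0}^2) \le \exp(-C_1 l^2 n^2 \epsilon_n^2)$. Since $\norm{\theta^j - \theta^0}/2 \ge r$ by construction, the balls $E_j$ cover $U_{l,n}$. Setting $\Phi_{l,n} = \max_j \Phi_j$, the union bound gives
\[
\bbE_{\theta^0}\Phi_{l,n} \;\le\; N \exp(-C_1 l^2 n^2 \epsilon_n^2) \;\le\; \exp\!\bigl(\log N - C_1 l^2 n^2 \epsilon_n^2\bigr),
\]
which collapses to $\exp(-C_1' l^2 n^2 \epsilon_n^2)$ for a smaller positive constant $C_1'$ once $\log N \lesssim n^2 \epsilon_n^2 \le l^2 n^2 \epsilon_n^2$ is absorbed (after a routine inflation of the test threshold inside Lemma \ref{lem:test} to guarantee that the leftover constant is positive). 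The Type-II bound is immediate: any $\theta \in U_{l,n}$ lies in some $E_j$, so $\bbE_\theta(1 - \Phi_{l,n}) \le \bbE_\theta(1 - \Phi_j) \le \exp(-C_2 l^2 n^2 \epsilon_n^2)$.

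The main obstacle is preventing an $O(n^2)$ blow-up of the covering number that would arise if one naively treated $\Theta_k$ as a subset of $[0,1]^{n \times n}$. This is precisely where Lemma \ref{lem:ball} and Corollary \ref{cor:ellip1} do the heavy lifting: they compress each slice to genuine $k^2$-dimensional ellipsoidal geometry, leaving only the benign $n \log k$ cost of indexing $\m Z_{n, k}$, which matches the clustering component of $n^2 \epsilon_n^2$ by construction. Matching $\log N$ to $l^2 n^2 \epsilon_n^2$ is then arithmetic bookkeeping against the explicit two-term structure of $\epsilon_n^2$.
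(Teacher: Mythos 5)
Your proposal is correct and follows essentially the same route as the paper: partition $U_{l,n}$ into slices $U_{l,n}\cap\Theta_k(z)$, use Lemma \ref{lem:ball}/Corollary \ref{cor:ellip1} to reduce the per-slice covering number to an absolute-constant-to-the-$k^2$ via a volume comparison of $k^2$-dimensional ellipsoids, then take a maximum of the point-versus-ball tests from Lemma \ref{lem:test} and union-bound the type-I error against $k^n \cdot c^{k^2} = e^{n\log k + k^2\log c} \lesssim e^{n^2\epsilon_n^2}$. The only cosmetic difference is that the paper works with a maximal $ln\epsilon_n/2$-separated set (a packing) and invokes its maximality to get a cover, whereas you speak directly of a cover; the resulting volume-ratio constants differ slightly but both are absolute, and both treatments share the same mild loose end about how the covering-entropy constant compares to the Hoeffding constant $C_1$ when $l$ is small (which is immaterial for the application, where $l \ge M$ with $M$ large).
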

\begin{proof}
Since $\theta^0 \in \Theta_k$, there exists $z^0 \in \m Z_{n,k}$ and $Q^0 \in [0, 1]^{k \times k}$ with $\theta^0 = \theta^{z^0, Q^0}$. For $z \in \m Z_{n, k}$, define $U_{l,n}(z) = U_{l,n} \cap \Theta_k(z)$, where $\Theta_k(z)$ is as in \eqref{eq:theta_kz}. Clearly, 
\begin{align}\label{eq:u_z}
U_{l,n}(z) = \big\{\theta^{z,Q} : Q \in [0, 1]^{k \times k}, \, l n \epsilon_n \le \norm{\theta^{z, Q} - \theta^{z^0, Q^0}} < (l+1) n \epsilon_n  \big\},
\end{align}
and $U_{l, n} \subset \cup_{z \in \m Z_{n, k}} U_{l,n}(z)$. We first use Lemma \ref{lem:test} to construct tests against $U_{l,n}(z)$ for fixed $z$. Our desired test is obtained by taking the maximum of all such test functions. 

Fix $z \in \mathcal{Z}_{n, k}$.  Let $\mathcal{N}_{l,n}(z) = \{\theta_{l, n, h} \in U_{l, n}(z): h \in I_{l,n}(z) \}$ be a {\em maximal} $l n \epsilon_n/2$-separated set inside  $U_{l, n}(z)$ for some index set $I_{l,n}(z)$;  
i.e., $\m N_{l,n}(z)$ is such that $\norm{\theta^1 - \theta^2} \geq l n \epsilon_n/2$ for all $\theta^1 \ne \theta^2 \in \mathcal{N}_{l,n}(z)$, 
and no subset of $U_{l, n}(z)$ containing $\m N_{l,n}(z)$ has this property.  We provide a volume argument to determine an upper bound for $\abs{I_{l,n}(z)}$, the cardinality of $\m N_{l, n}(z)$. The separation property implies that Euclidean balls of radius $l n \epsilon_n/4$ centered at the points in $\m N_{l, n}(z)$ are disjoint. Since $B_h^+ := \big\{\theta^{z, Q} : Q \in \mb R^{k \times k}, \norm{\theta^{z, Q} - \theta_{l, n, h}} < l n \epsilon_n/4 \big\}$ is contained inside an Euclidean ball of radius $l n \epsilon_n/4$ centered at $\theta_{l,n,h}$, the sets $B_h^+$ are disjoint as $h$ varies over $I_{l, n}(z)$. By the triangle inequality, all $B_h^+$s lie inside $B^+ = \big\{\theta^{z, Q} : Q \in \mb R^{k \times k}, \norm{\theta^{z,Q} - \theta^0} \le (5l/4+1) n \epsilon_n \big\}$, since $\norm{\theta^{z,Q} - \theta^0} \le \norm{\theta^{z,Q} - \theta_{l,n,h}} + \norm{\theta_{l,n,h} - \theta^0} \le (l+1) n \epsilon_n + l n \epsilon_n/4$. 

It should be noted that the sets $B_h^+$s and $B^+$ are constructed in a way that $Q$ is not restricted to be inside $[0, 1]^{k \times k}$. This allows us to invoke Corollary \ref{cor:ellip1} to identify $B_h^+$ and $B^+$ with appropriate ellipsoids in $\mb R^{k^2}$ and simplify volume calculations. First, since $\theta_{l,n,h} \in \Theta_k(z)$ for each $h$, it follows from (the equality part of) Corollary \ref{cor:ellip1} that $B_h^+ = \{\theta^{z,Q} : Q \in \xi_{k^2}(\bar{Q}_h, \wt{W})\}$ with $\bar{Q}_h$ constructed as in the proof of Lemma \ref{lem:ball} and $\wt{W}_{rs} = n_r n_s/\{(l n \epsilon_n)^2\}$. The equality is crucially used below; also note that $\wt{W}$ does not depend on $h$. Invoking Corollary \ref{cor:ellip1} one more time, we obtain $B^+ \subset \{\theta^{z,Q} : Q \in \xi_{k^2}(\bar{Q}^0, W)\}$, with $W_{rs} = n_r n_s/[\{ (5l/4+1) n \epsilon_n \}^2]$. 
We conclude that the Euclidean ellipsoids $\xi_{k^2}(\bar{Q}_h, \wt{W})$ are disjoint as $h$ varies over $I_{l, n}(z)$ and all of them are contained in $\xi_{k^2}(\bar{Q}^0, W)$. Comparing volumes, 
$$
| \xi_{k^2}(\bar{Q}_h, \wt{W}) | | I_{l,n}(z)| \le | \xi_{k^2}(\bar{Q}^0, W) |.
$$
Using the volume formula in \eqref{eq:vol}
and canceling out common terms, we finally have
\begin{align}\label{eq:card_ub}
| I_{l,n}(z) | \le \bigg\{ \frac {(5l/4+1)}{l/2} \bigg\}^{k^2} \le 9^{k^2}.
\end{align}
We are now in a position to construct the test. The maximality of $\m N_{l,n}(z)$ implies that $\m N_{l,n}(z)$ is an $l n \epsilon_n/2$-net of $U_{l,n}(z)$, i.e., the sets $E_{l, n, z, h} = \{ \theta \in [0, 1]^{n \times n} : \norm{\theta - \theta_{l, n, h}} < l n \epsilon_n/2\}$ cover $U_{l,n}(z)$ as $h$ varies. 
For each $\theta_{l,n,h} \in \m N_{l,n}(z)$, consider testing $H_0: \theta = \theta^0$ versus $H_1: \theta \in E_{l, n, z, h}$ using the test function from Lemma \ref{lem:test}. Lemma \ref{lem:test} is applicable since $\norm{\theta^0 - \theta_{l,n,h}} \ge l n \epsilon_n$; let $\Phi_{l, n, z, h}$ denote the corresponding test with type-I and II errors bounded above by $e^{-C l^2 n^2 \epsilon_n^2}$. Define $\Phi_{l,n} = \max_{z \in \m Z_{n,k}} \max_{h \in I_{l,n}(z)} \Phi_{l,n,z,h}$. For any $\theta \in U_{l,n}$, there exists $z \in \m Z_{n,k}$ and $h \in I_{l,n}(z)$ such that $\theta \in E_{l,n,z,h}$, so that $\bbE_{\theta} (1 - \Phi_{l,n}) \le \bbE_{\theta}(1 - \Phi_{l,n,z,h}) \le e^{-C l^2 n^2 \epsilon_n^2}$. Taking supremum over $\theta \in U_{l,n}$ delivers the desired type-II error. Further, the type-I error of $\Phi_{l,n}$ can be bounded as
\begin{align}
\bbE_{\theta^0} (\Phi_{l,n}) \le  \sum_{z \in \m Z_{n,k}} \sum_{h \in I_{l,n}(z)} \bbE_{\theta^0} (\Phi_{l,n,z,h}) \le k^n 9^{k^2} e^{- C l^2 n^2 \epsilon_n^2},
\end{align}
since $|\m Z_{n,k}| = k^n$ and by \eqref{eq:card_ub}, $|I_{l,n}(z)| \le 9^{k^2}$ for all $z$. The conclusion follows since $n^2 \epsilon_n^2 = k^2\log(n/k) + n \log k \gtrsim k^2 + n \log k$. 

\end{proof}


\section{Proof of Theorem \ref{thm:sbm_nonadaptive}}

\begin{proof}
Let $\bbE_0 \slash \bbP_0$ denote an abbreviation to $\bbE_{\theta^0} \slash \bbP_{\theta^0}$. Since $\theta^0 \in \Theta_k$, there exists $z^0 \in \m Z_{n,k}$ and $Q^0 \in [0, 1]^{k \times k}$ with $\theta^0 = \theta^{z^0, Q^0}$. Recall $\epsilon_n = k^2 \log n/n^2 + \log k/n$ and define $U_n = \big\{\theta \in \Theta_k: \norm{\theta - \theta^0}^2 > M^2 n^2 \epsilon_n^2\big\}$ for some large constant $M > 0$ to be chosen later. Letting $f_{\theta_{ij}}(A_{ij}) = \theta_{ij}^{A_{ij}} (1 - \theta_{ij})^{A_{ij}}$ denote the $\mbox{Bernoulli}(\theta_{ij})$ likelihood evaluated at $A_{ij}$, the posterior probability assigned to $U_n$ can be written as 
\begin{align}\label{eq:post_Un}
\Pi_n(U_n \mid A) = \frac{ \int_{U_n} \prod_{i=1}^n \prod_{j=1}^n \frac{ f_{\theta_{ij}}(A_{ij}) }{ f_{\theta_{ij}^0}(A_{ij}) } \, p(dz, dQ) }{ \int_{\Theta_k} \prod_{i=1}^n \prod_{j=1}^n \frac{ f_{\theta_{ij}}(A_{ij}) }{ f_{\theta_{ij}^0}(A_{ij}) } \, p(dz, dQ) } = \frac{\m N_n}{\m D_n},
\end{align}
where $\m N_n$ and $\m D_n$ respectively denote the numerator and denominator of the fraction in \eqref{eq:post_Un}. Let $\m F_n$ denote the $\sigma$-field generated by $\tilde{A} = (\tilde{A}_{ij})$, with $\tilde{A}_{ij}$ independently distributed as $\mbox{Bernoulli}(\theta_{ij}^0)$; the true data generating distribution. We first claim that there exists a set $\m A_n \in \m F_n$ where we can bound $\m D_n$ from below with large probability under $\bbP_0$ in Lemma \ref{lem:denom}. A proof can be adapted from Lemma 10 of \cite{ghosal2007convergence} and hence omitted.
\begin{lemma}\label{lem:denom}
Assume $\theta^0$ satisfies the conditions of Theorem \ref{thm:sbm_nonadaptive}. Then, there exists a set $\m A_n$ in the $\sigma$-field $\m F_n$ with $\lim_{n \to \infty} \bbP_0(\m A_n) = 1$ such that within $\m A_n$,
\begin{align*}
\m D_n \ge e^{-C n^2 \epsilon_n^2} \Pi\big( \norm{\theta - \theta^0}^2 < n^2 \epsilon_n^2 \big).
\end{align*}
\end{lemma}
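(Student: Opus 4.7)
The plan is to adapt the standard evidence lower bound argument of Ghosal and van der Vaart (2007, Lemma 10), tailored to the block-model setting. The key idea is to restrict the integral defining $\mathcal{D}_n$ to a KL-type sub-neighborhood of $\theta^0$ contained inside the $L^2$ ball, on which the log-likelihood ratio can be controlled in first and second moment under $\bbP_0$.

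First I would introduce the sub-neighborhood
$$B_n^\star = \big\{\theta^{z^0, Q} : Q \in [0,1]^{k \times k},\ \max_{r,s} |Q_{rs} - Q^0_{rs}| < c\,\epsilon_n \big\}$$
for a sufficiently small constant $c > 0$. Since every entry of $Q^0$ lies in $(\delta, 1-\delta)$, for $n$ large every entry of every $\theta \in B_n^\star$ lies in $(\delta/2, 1-\delta/2)$, and moreover $\norm{\theta - \theta^0}^2 \le n^2 \max_{r,s}(Q_{rs} - Q^0_{rs})^2 \le c^2 n^2 \epsilon_n^2$, so $B_n^\star$ is contained in the $L^2$ ball of radius $n \epsilon_n$. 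Bernoulli likelihoods with parameters confined to $[\delta/2, 1-\delta/2]$ satisfy the quadratic KL and variance bounds $\bbE_{\theta^0_{ij}} \log [f_{\theta^0_{ij}}(A_{ij})/f_{\theta_{ij}}(A_{ij})] \le C_\delta (\theta_{ij} - \theta^0_{ij})^2$ and $\Var_{\theta^0_{ij}}\log[f_{\theta^0_{ij}}(A_{ij})/f_{\theta_{ij}}(A_{ij})] \le C_\delta (\theta_{ij} - \theta^0_{ij})^2$ by a Taylor expansion. Summing over the $n^2$ pairs yields
$$\sup_{\theta \in B_n^\star} K_n(\theta^0, \theta) \le C' n^2 \epsilon_n^2, \qquad \sup_{\theta \in B_n^\star} V_n(\theta^0, \theta) \le C' n^2 \epsilon_n^2,$$
where $K_n$ and $V_n$ denote the aggregated KL divergence and variance, respectively.

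Next, writing $L_n(\theta) = \sum_{i,j}\log [f_{\theta_{ij}}(A_{ij})/f_{\theta^0_{ij}}(A_{ij})]$, Jensen's inequality applied to the inner integral gives $\mathcal{D}_n \ge \Pi(B_n^\star)\exp(Z_n)$ where $Z_n = \Pi(B_n^\star)^{-1}\int_{B_n^\star} L_n(\theta)\,d\Pi(\theta)$. By Fubini and the bounds of the preceding paragraph, $\bbE_0 Z_n \ge -C' n^2 \epsilon_n^2$ and $\Var_0(Z_n) \le C' n^2 \epsilon_n^2$. Taking
$$\mathcal{A}_n = \{Z_n \ge -2 C' n^2\epsilon_n^2\} \in \mathcal{F}_n,$$
Chebyshev's inequality gives $\bbP_0(\mathcal{A}_n^c) \lesssim (n^2\epsilon_n^2)^{-1} \to 0$ because $n^2\epsilon_n^2 \to \infty$, and on $\mathcal{A}_n$ we obtain $\mathcal{D}_n \ge e^{-C n^2 \epsilon_n^2} \Pi(B_n^\star)$.

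The only remaining step is replacing $\Pi(B_n^\star)$ with the $L^2$-ball prior mass $\Pi(\norm{\theta - \theta^0}^2 < n^2\epsilon_n^2)$ appearing in the statement. Since $B_n^\star$ lies inside the $L^2$ ball, this amounts to a prior-mass comparison of the form $\Pi(L^2 \text{ ball}) \le e^{C'' n^2\epsilon_n^2}\Pi(B_n^\star)$, whose extra factor is absorbed into the exponential. Under the uniform prior on $Q$ and the Dirichlet--multinomial prior on $z$, an ellipsoid volume calculation in the spirit of Lemma \ref{lem:ball} yields $\Pi(B_n^\star) \ge e^{-C''' n^2 \epsilon_n^2}$, and the trivial bound $\Pi(L^2 \text{ ball}) \le 1$ then suffices. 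I expect this prior-mass comparison to be the most delicate piece of the adaptation; the probabilistic Jensen/Chebyshev core is entirely standard.
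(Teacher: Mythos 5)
The paper does not actually write out a proof of Lemma~\ref{lem:denom} -- it defers entirely to Lemma~10 of Ghosal and van der Vaart (2007) -- so your task was to supply the intended adaptation, and your overall plan is the right one. Restricting to a sub-neighborhood $B_n^\star$ supported on the true labeling $z^0$ and with $\max_{r,s}|Q_{rs}-Q^0_{rs}|<c\epsilon_n$ is exactly what is needed: the $L^2$ ball in $\Theta_k$ is \emph{not} a Kullback--Leibler neighborhood (it contains $\theta$ with entries arbitrarily close to $0$ and $1$), so one cannot apply the GvdV lemma to it directly, and you correctly noticed this. The Jensen/Fubini/Chebyshev core is standard and your variance bound via Jensen on the normalized integral is fine, as is the use of $n^2\epsilon_n^2\to\infty$.

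The one genuine gap is that the final prior-mass step is asserted rather than proved. You claim ``an ellipsoid volume calculation in the spirit of Lemma~\ref{lem:ball} yields $\Pi(B_n^\star)\ge e^{-C'''n^2\epsilon_n^2}$,'' but $B_n^\star$ is a cube and, more importantly, $\Pi(B_n^\star)\ge p(z^0)\,(2c\epsilon_n)^{k^2}$ requires a \emph{lower} bound on the Dirichlet--multinomial mass $p(z^0)$, which is not a volume calculation at all. The route that actually works (and is implicitly used in the paper's main proof) is via Lemma~\ref{lem:prior_rat}: since $\max_z p(z)/p(z^0)\le e^{Cn\log k}$ and $\sum_z p(z)=1$ over $|\m Z_{n,k}|=k^n$ terms, one gets $p(z^0)\ge e^{-C'n\log k}$, and $n\log k\le n^2\epsilon_n^2$. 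For the $(2c\epsilon_n)^{k^2}$ factor you also need $k^2\log(1/\epsilon_n)\lesssim n^2\epsilon_n^2$, which follows from $\epsilon_n\ge (k/n)\sqrt{\log(n/k)}$ so that $\log(1/\epsilon_n)\le\log(n/k)$ and $k^2\log(n/k)\le n^2\epsilon_n^2$; note this last chain needs $\log(n/k)\gtrsim 1$, a mild but real restriction that should be acknowledged. Once these two ingredients are in place your shortcut via $\Pi(\norm{\theta-\theta^0}^2<n^2\epsilon_n^2)\le 1$ is valid, though it is cleaner to simply note that the lemma is only ever used with the denominator replaced by the lower bound $p(z^0)\epsilon_n^{k^2}$, which is exactly (up to constants) $\Pi(B_n^\star)$, so proving $\m D_n\ge e^{-Cn^2\epsilon_n^2}\Pi(B_n^\star)$ already suffices for the theorem.
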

In view of Lemma \ref{lem:denom}, it is sufficient to prove that
$$
\lim_{n \to \infty} \bbE_0 \big\{ \Pi_n(U_n \mid A) \ind_{\m A_n^c} \big\} = 0.
$$
For $l \ge M$, let $U_{l, n} = \big\{ \theta \in \Theta_k : l^2 n^2 \epsilon_n^2 \le \norm{\theta - \theta^0}^2 < (l+1)^2 n^2 \epsilon_n^2 \big\}$ denote an annulus in $\Theta_k$ centered at $\theta^0$ with inner and outer Euclidean radii $l n \epsilon_n$ and $(l+1) n \epsilon_n$ respectively. Using a standard testing argument (see, for example, the proof of Proposition 5.1 in \cite{castilloneedles}) in conjunction with Lemma \ref{lem:denom}, one arrives at
\begin{align}\label{eq:castillo_step}
\bbE_0 \big\{ \Pi_n(U_n \mid A) \ind_{\m A_n^c} \big\} \le 
\sum_{l=M}^{\infty} \bigg\{ \bbE_0 (\Phi_{l, n}) + \beta_{l, n} \sup_{\theta \in U_{l, n} } \bbE_{\theta} (1-  \Phi_{l,n}) \bigg\}
\end{align}
where 
\begin{eqnarray}\label{eq:beta_ln}
\beta_{l, n}  = \frac{\Pi(U_{l, n})}{e^{-C n^2 \epsilon_n^2} \Pi( \norm{\theta  - \theta^0}^2  < n^2 \epsilon_n^2) }
\end{eqnarray}
and $\Phi_{l, n}$ is the test function constructed in Lemma \ref{lem:test_ann} for testing $H_0: \theta = \theta^0$ versus $H_1: \theta \in U_{l, n}$ with error rates as in \eqref{eq:pt_vs_comp}. Recall $U_{l,n}(z) = U_{l,n} \cap \Theta_k(z)$ and its equivalent representation in \eqref{eq:u_z} from the proof of Lemma \ref{lem:test_ann}. Since $U_{l,n} \subseteq \cup_{z \in \m Z_{n,k}} U_{l,n}(z)$, from \eqref{eq:prior_U},
\begin{align}\label{eq:num_1}
\Pi(U_{l,n}) \le \sum_{z \in \m Z_{n,k}} \Pi \big\{U_{l,n}(z) \big\} = \sum_{z \in \m Z_{n,k}} \Pi \big\{U_{l,n}(z) \mid z \big\} \, p(z),
\end{align}
where $p(z)$ is the prior probability \eqref{eq:dir_mult} of $z$ under the Dirichlet-multinomial prior. By an application of Lemma \ref{lem:ball}, $U_{l,n}(z) \subset \{\theta^{z, Q} : Q \in \xi_{k^2}(\bar{Q}^0, W) \cap [0, 1]^{k \times k}\}$ with $W_{rs} = n_r n_s/\{(l+1) n \epsilon_n\}^2, 1\le r, s \le k$. Therefore, $\Pi\{ U_{l,n}(z) \mid z \}$ is bounded above by the probability of the set $\xi_{k^2}(\bar{Q}^0, W) \cap [0, 1]^{k \times k}$ under the uniform prior on $Q$, which in turn can be bounded above by the Euclidean volume of $\xi_{k^2}(\bar{Q}^0, W)$. Using volume formula \eqref{eq:vol},  
\begin{align}\label{eq:num_2}
 \Pi \big\{U_{l,n}(z) \mid z \big\} \le | \xi_{k^2}(\bar{Q}^0, W) | = \frac{\pi^{k^2}}{\Gamma(k^2/2+1)} \prod_{r=1}^k \prod_{s=1}^k \frac{(l+1)n \epsilon_n}{n_r n_s}.
\end{align}
Next, consider the term $\Pi( \norm{\theta  - \theta^0}^2  < n^2 \epsilon_n^2)$ in the denominator of the expression for $\beta_{l,n}$. Bound $\Pi( \norm{\theta  - \theta^0}^2  < n^2 \epsilon_n^2) \ge \Pi( \norm{\theta  - \theta^0}^2  < n^2 \epsilon_n^2 \mid z = z^0) p(z^0)$ and using Lemma \ref{lem:ball} once again, 
\begin{align}\label{eq:denom_1}
\Pi\bigg( \norm{\theta  - \theta^0}^2  < n^2 \epsilon_n^2 \mid z = z^0 \bigg) = p\bigg\{ Q : \sum_{r=1}^k \sum_{s=1}^k n_{0r} n_{0s} (Q_{rs} - Q^0_{rs})^2 < n^2 \epsilon_n^2 \bigg\}.
\end{align}
The probability in the right hand side of the above display is the volume of the {\em intersection} of an ellipsoid with $[0, 1]^{k \times k}$, and therefore we cannot simply replace the probability by the volume of the ellipsoid. Instead, we embed an appropriate rectangle inside the intersection of the ellipsoid and $[0, 1]^{k \times k}$. We claim that 
{\small
\begin{align}\label{eq:denom_2}
\prod_{r=1}^k \prod_{s=1}^k [Q_{rs}^0 - \epsilon_n/2, Q_{rs}^0 + \epsilon_n/2] \subset \bigg\{ Q \in [0, 1]^{k \times k}: \sum_{r=1}^k \sum_{s=1}^k n_{0r} n_{0s} (Q_{rs} - Q^0_{rs})^2 < n^2 \epsilon_n^2 \bigg\}.
\end{align}}
First, based on our assumption that all entries of $Q^0$ are bounded away from $0$ and $1$ and the fact that $\epsilon_n \to 0$, it is immediate that the rectangle is contained in $[0, 1]^{k \times k}$ for sufficiently large $n$. Second, for any $Q$ with $|Q_{rs} - Q^0_{rs}| \le \epsilon_n/2$ for all $1 \le r, s \le k$, we have
$$
\sum_{r=1}^k \sum_{s=1}^k n_{0r} n_{0s} (Q_{rs} - Q^0_{rs})^2 \le  \frac{\epsilon_n^2}{4} \sum_{r=1}^k \sum_{s=1}^k n_{0r} n_{0s} = \frac{n^2 \epsilon_n^2}{4},
$$
thereby proving the claim in \eqref{eq:denom_2}. Now we can bound $\Pi( \norm{\theta  - \theta^0}^2  < n^2 \epsilon_n^2 \mid z = z^0)$ from below by the volume of the rectangle, which equals $\epsilon_n^{k^2}$. 
Using this fact along with the bounds \eqref{eq:num_1}, \eqref{eq:num_2} and \eqref{eq:denom_1}, we have from \eqref{eq:beta_ln} that 
\begin{align}\label{eq:beta_ln1}
e^{-C n^2 \epsilon_n^2} \beta_{l,n} \le \frac{\pi^{k^2} (l+1)^{k^2} n^{k^2}}{\Gamma(k^2/2+1)}  \sum_{z \in \m Z_{n,k}} \frac{p(z)}{p(z^0)}.
\end{align}
Since $n_{0r} \ge 1$ for all $r = 1, \ldots, k$, invoke Lemma \ref{lem:prior_rat} to bound $\sum_{z \in \m Z_{n,k}} p(z)/p(z_0) \le | \m Z_{n,k} | e^{ C n \log k} \le e^{C n \log k}$, since $|\m Z_{n,k}| = k^n$. Next, use the well-known fact (see, for example, \cite{abramowitz1964handbook}) that for any $\alpha > 0$, $\Gamma(\alpha + 1) \ge \sqrt{2 \pi} e^{-\alpha} \alpha^{\alpha + 1/2}$ to obtain
\begin{align}\label{eq:beta_ln2}
e^{-C n^2 \epsilon_n^2} \beta_{l,n} \lesssim \{\pi \sqrt{2e} (l+1)\}^{k^2} \bigg(\frac{n}{k}\bigg)^{k^2} \lesssim \{\pi \sqrt{2e}  (l+1)\}^{k^2} e^{k^2 \log(n/k)}.
\end{align}
Substituting in \eqref{eq:castillo_step}, the expression in \eqref{eq:castillo_step} converges to zero for all $M$ larger than a suitable constant.

\end{proof}

\section{Discussion}
In this article, we presented a theoretical investigation of posterior contraction in stochastic block models.  One crucial assumption in our 
current results is that the true number of clusters $k$ is known.  An interesting direction
is to develop a fully Bayesian approach by placing a prior on $k$ and to show that the corresponding procedure yields 
optimal rates of posterior convergence adaptively for all values of $k \in \{1, 2, \ldots, n\}$.  
Such an approach can be connected to nonparametric estimation of networks \cite{bickel2009nonparametric} where one typically assumes a more flexible way of 
data generation;  $A_{ij} \mid \xi_i, \xi_j \sim \mbox{Bernoulli}\{f(\xi_i, \xi_j)\}$, where $f$ is a function 
from $[0,1]^2 \to [0,1]$, called a {\em graphon} and $\xi_i$s are iid random variables on $[0, 1]$. It is well known (refer, for example, to \cite{airoldi2013stochastic}) that
one can approximate a sufficiently smooth graphon using elements of $\Theta_k$. When the smoothness of the graphon is unknown, the prior on $k$ should facilitate the posterior to concentrate in the appropriate region. Using such approximation results and modifying our Theorem 
 \ref{thm:sbm_nonadaptive}, it may be possible to derive posterior convergence rates for estimating a graphon.

%


\bibliographystyle{biometrika}
\bibliography{all_refs}
\end{document}